\crefname{enumi}{}{}
\crefname{property}{property}{properties}
\crefname{LEM}{Lemma}{Lemmas}
\crefname{THM}{Theorem}{Theorems}
\renewcommand{\PrintDOI}[1]{\doi{#1}}
\newtheorem{THM}{Theorem}
\newtheorem{LEM}[THM]{Lemma}
\theoremstyle{definition}
\newcommand{\abs}[1]{\lvert#1\rvert}
\newcommand{\menge}[1]{\left\{#1\right\}}
\renewcommand{\phi}{\varphi}
\newcommand{\join}{\lor}
\newcommand{\meet}{\land}
\newcommand{\sub}{\subseteq}
\newcommand{\sm}{\smallsetminus}
\newcommand{\cP}{\mathcal{P}}
\title{A canonical tree-of-tangles theorem for structurally submodular separation systems}
\author{Christian Elbracht\and Jakob Kneip}
\date{16th April 2021}
\begin{document}

\maketitle


\begin{abstract}
	We show that every structurally submodular separation system admits a canonical tree set which distinguishes its tangles.
\end{abstract}

\section{Introduction}
The concept of~\emph{tangles} has its origins in the graph minors project of Robertson and Seymour~\cite{GMX}, where tangles were introduced as a unifying framework with which to describe and study highly cohesive substructures in graphs. A central theorem in their work is a~\emph{tree-of-tangles theorem}, which roughly says that the tangles of a graph give rise to a tree-decomposition of that graph with each tangle in a different bag.

Since their inception the theory of tangles has seen a number of advancements: it has been discovered~\cites{AbstractSepSys,ProfilesNew} that the notion of tangles can be formulated more abstractly and does not require an underlying graph structure, making it applicable to a wider range of combinatorial structures. A~\emph{separation system} in this abstract set-up is an axiomatisation of the properties of separations of well-known structures such as graphs or matroids: it is a set, whose elements we call~\emph{separations}, that is equipped with a partial ordering in terms of which all the properties important for tangle theory, such as `nested', `orientation', or `consistency', can be expressed~\cite{AbstractSepSys}.

This higher level of abstraction, and of stripping away the superfluous information about the underlying graph, have facilitated a number of cleaner proofs and stronger results. A recent result~\cite{ProfilesNew} of Diestel, Hundertmark, and Lemanczyk extends the tree-of-tangles theorem of Robertson and Seymour~\cite{GMX} to tangles outside graph theory by finding a~\emph{tree set}, a set of pairwise nested separations, and in addition to this achieves a significant strengthening: the tree set found in~\cite{ProfilesNew} can be built~canonically. The latter means that the construction of the tree set can be carried out using only invariants of the given combinatorial structure. Having a canonical way of constructing the tree set is desirable, for instance, for reproducibility of results when implementing an algorithm for this construction: the canonicity guarantees that the algorithm will construct the same tree set regardless of how the separation system and tangles to be distinguished are presented to it as input.

Establishing canonical tree-of-tangles theorems has been a long-standing goal in tangle theory, since the original proof in~\cite{GMX} relied on a technique that is unable to produce canonical results. A first breakthrough towards this goal was achieved in~\cite{CDHH13CanonicalAlg}, which managed to establish such a canonical theorem for tangles in graphs. With a similar overall strategy Diestel, Hundertmark and Lemanczyk~\cite{ProfilesNew} could then extend this canonical result to arbitrary separation systems and the most general class of tangles, which are called~\emph{profiles}.

A central ingredient in~\cite{ProfilesNew} is an~\emph{order function} on the separations considered, similar to the order~$ \abs{A\cap B} $ of a separation~$ (A,B) $ of a graph that was already used in~\cite{GMX} and~\cite{CDHH13CanonicalAlg}. In this setting one then considers the separation system~$ \vS_k $ of all separations of order less than~$ k $ and studies its tangles. In analogy to the function~$ {(A,B)\mapsto\abs{A\cap B}} $ from graphs this order function is usually assumed to be submodular. This submodularity of the order function has a structural effect on the separation system~$ \vS_k $: for any two separations in~$ \vS_k $ at least one of their pairwise join and meet (which are separations given by opposite `corners' of these separations) again lies in~$ \vS_k $.

Later Diestel, Erde, and Weißauer~\cite{AbstractTangles} showed that the latter structural condition by itself is already sufficiently strong for proving tree-of-tangles theorems: tangle theory can be meaningfully studied without the hitherto usual assumption of a submodular order function, further widening its applicability. If a separation system has this structural property but not necessarily a submodular order function then it is~\emph{structurally submodular} or simply~\emph{submodular} if the context is clear. The tree-of-tangles theorem established in~\cite{AbstractTangles} then reads as follows:

\begin{THM}[\cite{AbstractTangles}*{Theorem 6}]\label{thm:Daniel}
	Let~$ \vS $ be a structurally submodular separation system and~$ \cP $ a set of profiles of~$ S $. Then~$ \vS $ contains a tree set~$ N $ that distinguishes~$ \cP $. 
 \end{THM}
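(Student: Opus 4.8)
The plan is to distinguish all of $\cP$ at once. Note first that whenever $P_1,P_2\in\cP$ are distinct there is \emph{some} separation in $S$ distinguishing them --- otherwise $P_1$ and $P_2$ would orient every separation the same way and coincide --- and that any separation distinguishing two profiles is automatically non-degenerate and non-trivial, so any nested set of such distinguishers is a tree set. It therefore suffices to find a nested set $N\sub\vec S$ that, for every unordered pair $\{P_1,P_2\}$ of distinct profiles in $\cP$, contains a separation distinguishing $P_1$ and $P_2$. I would build $N$ greedily, starting from $N=\es$: as long as some pair $\{P_1,P_2\}$ is not yet distinguished by $N$, add to $N$ one separation that distinguishes $P_1$ and $P_2$ and is nested with everything already in $N$. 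Since adding separations never undoes a distinction already achieved, this works as soon as such an addition can always be found; for finite $\vec S$ the process then terminates, and for infinite $\vec S$ one runs it transfinitely or passes to the finite case by a standard compactness argument. Hence everything reduces to the following uncrossing statement: \emph{if $N\sub\vec S$ is nested and does not distinguish the profiles $P_1,P_2\in\cP$, then $\vec S$ contains a separation that distinguishes $P_1$ and $P_2$ and is nested with every element of $N$.}

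To prove this I would pick any $r\in S$ distinguishing $P_1$ and $P_2$, say with $\vec r\in P_1$ and $\overleftarrow r\in P_2$, and induct on the number of elements of $N$ that $r$ crosses. If that number is $0$ we are done. Otherwise pick $t\in N$ crossing $r$; as $N$ does not distinguish $P_1$ and $P_2$ we may take the orientation with $\vec t\in P_1\cap P_2$, and consider the four corner separations of $r$ and $t$. Two facts drive the step. First, a corner of $r$ and $t$ is nested with $t$, and also with every $t'\in N$ that $r$ is nested with --- here one uses that $t\in N$ is nested with every member of $N$, hence with $t'$, together with the nestedness of $r$ and $t'$ --- so every corner of $r$ and $t$ crosses strictly fewer elements of $N$ than $r$ does. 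Second, the consistency and profile properties of $P_1$ and $P_2$ determine, for each corner, its orientation in $P_1$ and in $P_2$; one finds that exactly two of the four corners --- the separations with orientations $\vec r\vee\vec t$ and $\overleftarrow r\vee\vec t$ --- still distinguish $P_1$ and $P_2$ (for instance $\vec r\vee\vec t\in P_1$ by the profile property applied to $\vec r,\vec t\in P_1$, while $\overleftarrow r\wedge\overleftarrow t\in P_2$ by consistency of $P_2$, using $\overleftarrow r,\vec t\in P_2$). Finally, structural submodularity applied to $\vec r$ and $\vec t$, and to $\vec r$ and $\overleftarrow t$, puts at least one corner of each of the two pairs of opposite corners into $\vec S$; whenever one of the corners it provides is a distinguishing one, replacing $r$ by it closes the induction.

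The step I expect to be the genuine obstacle is exactly this last combination. Structural submodularity only ever yields \emph{one} corner from each of the two opposite pairs, and a priori the two corners it hands us could be precisely the two that do \emph{not} distinguish $P_1$ and $P_2$; ruling this out --- equivalently, guaranteeing that a distinguishing corner always survives inside $\vec S$ --- is the technical heart of the argument. I would try to rule it out either by refining the choice of $r$ (and of the crossing partner $t$), so that the bad configuration forces the existence of a strictly better distinguisher and hence a contradiction, or by first localising to the portion of $\vec S$ cut out by the separations already placed in $N$, which retains enough submodular structure to carry out the argument there. Granting the uncrossing lemma, the remaining ingredients --- regularity of distinguishers, the greedy bookkeeping, and (if needed) the transfinite or compactness wrapper for infinite $\vec S$ --- are routine.
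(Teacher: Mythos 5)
Your reduction to the uncrossing statement is fine, and your analysis of the four corners is correct: with $\vr\in P_1$, $\rv\in P_2$ and $\vt\in P_1\cap P_2$, the only corners that still distinguish $P_1$ and $P_2$ are (the separations underlying) $\vr\join\vt$ and $\rv\join\vt$. But the step you yourself flag as the obstacle is a genuine gap, not a routine verification: structural submodularity applied to the pair $\{\vr,\vt\}$ may return only $\vr\meet\vt$, and applied to $\{\rv,\vt\}$ only $\rv\meet\vt$ --- precisely the two corners that do \emph{not} distinguish $P_1$ and $P_2$ --- and nothing in your set-up rules this configuration out. Minimising the number of crossings with $N$ in the choice of $r$ does not help here, because the bad corners, while nested with more of $N$, are useless as distinguishers, so the induction cannot proceed with them; and the two remedies you sketch (refining the choice of $r$ and $t$, or localising to the part of $\vS$ cut out by $N$) are hopes rather than arguments. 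This is exactly the point where the proof of the cited Theorem~6 needs an additional extremal-choice idea, so the missing step is the crux of the theorem, not bookkeeping.

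It may help to see how the paper gets around this. It never uncrosses a single distinguisher against a nested set at all. Instead it inducts on $\abs{\cP}$ and works with \emph{exclusive} separations: those lying in exactly one $P\in\cP$. For each $P$ it takes the set $M_P$ of maximal $P$-exclusive separations; maximality is the lever that breaks the symmetry you are stuck on, because whenever submodularity could offer a join that is again $P$-exclusive and strictly larger than a maximal element of $M_P$, that is a contradiction, so the useful half of the submodularity alternative is \emph{forced} (this is how Lemmas~\ref{lem:nestedbetween}, \ref{lem:infimum} and \ref{lem:restriction} run). Elements of distinct $M_P$'s are then automatically nested, the infimum of $M_P$ serves as a canonical representative, and the separations nested with all the $M_P$'s still distinguish the remaining profiles, so induction applies. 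If you want to salvage your pairwise-uncrossing route, you need an analogous extremal choice (e.g.\ maximality of the distinguisher inside a profile) that forces a distinguishing corner into $\vS$; without it the argument does not close.
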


\noindent Here, a tree set~$ N $~\emph{distinguishes}~$ \cP $ if each pair of profiles in~$ \cP  $ lies on different sides of some separation in~$ N $. For formal definitions see~\cref{sec:defs}.

This~\cref{thm:Daniel} is even more widely applicable than the result of~\cite{ProfilesNew}, but has one major downside: it does not yield canonicity, since the proof in~\cite{AbstractTangles} chooses certain separations arbitrarily.

In this note we present a new proof, which establishes the following canonical version of~\cref{thm:Daniel}:
\begin{THM}\label{thm:canonical_old}
	Let~$ \vS $ be a structurally submodular separation system and~$ \cP $ a set of profiles of~$ S $. Then there is a nested set~$ N=N(\vS,\cP)\sub S $ which distinguishes~$ \cP $. This~$ N(\vS,\cP) $ can be chosen canonically: if~$ \phi\colon\vS\to\vS' $ is an isomorphism of separation systems and~$ \cP'\coloneqq\menge{\phi(P)\mid P\in\cP} $ then~$ \phi(N(\vS,\cP))=N(\vS',\cP') $.
\end{THM}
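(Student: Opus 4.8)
The plan is to define $N=N(\vS,\cP)$ by an \emph{intrinsic} recipe — one phrased only in terms of the partial order and involution of $\vS$ and of the set $\cP$ of profiles — so that canonicity is automatic: an isomorphism $\phi\colon\vS\to\vS'$ preserves exactly this data and sends profiles to profiles, hence carries the set built from $(\vS,\cP)$ to the set built the same way from $(\vS',\cP')$. So the whole task is to produce such an intrinsically described $N\sub S$ and show that it is nested and distinguishes $\cP$; unlike the proof of~\cref{thm:Daniel} in~\cite{AbstractTangles}, no separation may be chosen arbitrarily along the way.

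For the candidate separations, call $r\in S$ a \emph{$(P,P')$-distinguisher} if the profiles $P,P'\in\cP$ orient $r$ differently, and write $\vec r$ for the orientation with $\vec r\in P$. For each such pair let $\mathcal{D}_{P,P'}$ be the set of all its distinguishers, and $\mathcal{M}_{P,P'}\sub\mathcal{D}_{P,P'}$ the set of those $r$ for which $\vec r$ is $\le$-minimal in $\{\,\vec r'\mid r'\in\mathcal{D}_{P,P'}\,\}$. The candidate for $N$ is then $\bigcup_{\{P,P'\}}\mathcal{M}_{P,P'}$, or else a canonically described nested transversal of the family $(\mathcal{D}_{P,P'})_{\{P,P'\}}$ produced by a splinter-type lemma; either way the set is defined purely from $(\vS,\cP)$.

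The structural engine is the interaction of submodularity with the profile axioms. One uses the standard closure facts — if $\vec a,\vec b\in P$ for a profile $P$ and $\vec a\meet\vec b\in\vS$ then $\vec a\meet\vec b\in P$, and consistency governs joins that land in $\vS$ — to prove an \emph{uncrossing step}: whenever a $(P,P')$-distinguisher $r$ crosses some $s$, structural submodularity forces one of the four corner separations of $r$ and $s$ into $\vS$, and a case analysis shows that among them lies a $(P,P')$-distinguisher whose $P$-orientation is $\le\vec r$, and strictly below $\vec r$ once $s$ is oriented appropriately. Applying this with $s$ ranging over the other distinguishers shows that $\mathcal{M}_{P,P'}$ is nested for each fixed pair, and — by the same mechanism carried out across pairs — that $N$ is nested. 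That $N$ distinguishes $\cP$ then follows since any $(P,P')$-distinguisher can be pushed down the order to one in $\mathcal{M}_{P,P'}$.

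The main obstacle is the absence of an order function to orchestrate all this. In~\cite{ProfilesNew} the analogous construction proceeds by induction on the order of separations, so that minimal distinguishers exist for free and the corners from uncrossing never have larger order; here one must instead (i) guarantee that $\le$-minimal distinguishers of each pair exist at all, which in general needs a finiteness hypothesis on $\vS$ or on $\cP$, or a well-foundedness/Zorn argument, and, more delicately, (ii) ensure that iterated uncrossing terminates, i.e.\ that nestedness is achieved simultaneously across all pairs and not merely within a single pair. The heart of the matter is to show that the uncrossing step truly replaces a distinguisher by a \emph{strictly smaller} one — not an incomparable corner — and that this cannot continue forever, so that minimality is both attained and preserved. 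Finally one must check that no step ever chooses between the available corners (hence ``all minimal distinguishers'' rather than ``one per pair''), so that $N$ depends only on $(\vS,\cP)$ and commutes with isomorphisms, which is precisely the canonicity claimed.
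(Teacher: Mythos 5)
There is a genuine gap at the core of your construction, namely in the two claims that (a) the set $\mathcal{M}_{P,P'}$ of $\le$-minimal distinguishers of a fixed pair is nested, and (b) the uncrossing step always replaces a distinguisher by a \emph{strictly smaller} one. Both fail. If $r\ne s$ both distinguish $P$ from $P'$, with $\vr,\vs\in P$ and $\rv,\sv\in P'$, then $r$ and $s$ can only be nested via comparable orientations $\vr,\vs$: the alternative $\vr\le\sv$ makes $P'$ inconsistent, and $\rv\le\vs$ makes $P$ inconsistent. Hence two \emph{distinct} $\le$-minimal distinguishers of the same pair are incomparable and therefore necessarily cross, so $\mathcal{M}_{P,P'}$ is nested only when it is a singleton, and the union over all pairs need not be nested at all. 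Nor is there any descent to force singletons: for crossing $\vr,\vs\in P$ structural submodularity only guarantees that one of $\vr\meet\vs$, $\vr\join\vs$ lies in $\vS$. If it is the meet, you do get a strictly smaller distinguisher (contradicting minimality of $\vr$), but it may well be the join, which is a distinguisher \emph{larger} than $\vr$ -- so crossing minimal distinguishers can coexist and nothing can be pushed down. (The other corners, e.g.\ $\vr\join\sv$, are not supplied by submodularity applied to $\vr,\vs$, and in any case such a corner is oriented the same way by $P$ and $P'$, so it does not distinguish them.) The fallback you offer -- a ``canonically described nested transversal via a splinter-type lemma'' -- is exactly the statement that would need proof; the splinter lemma in its basic form makes arbitrary choices and yields no canonicity. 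The existence of minimal elements is the least of the problems: finiteness is implicitly assumed in this setting anyway (the paper enumerates $M_P$ in \cref{lem:infimum}).

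The paper turns precisely this obstruction into its method, but working at the top of the order rather than the bottom: it considers separations lying in exactly one $P\in\cP$ (\emph{exclusive} separations), lets $M_P$ be the maximal ones, notes that the elements of a single $M_P$ pairwise cross (the same phenomenon as above) while elements of different $M_P$'s are automatically nested (\cref{lem:nestedbetween}), and then extracts a canonical representative for $P$ not by choosing an element but by taking the infimum of $M_P$ in the poset $\vS$, which is shown to exist, to be $P$-exclusive, and to inherit nestedness (\cref{lem:infimum}); the theorem follows by induction on $\abs{\cP}$ after restricting to the separations nested with all $M_P$ (\cref{lem:restriction}). Note also that your opening assertion that an isomorphism ``sends profiles to profiles'' is exactly the subtlety the paper must circumvent: the profile property and structural submodularity refer to meets and joins in an ambient universe, which an isomorphism of separation systems need not preserve. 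This is why the paper first proves the intrinsic \cref{thm:canonical}, with $\cP$-submodularity and infima/suprema taken in $\vS$ itself, and deduces \cref{thm:canonical_old} via \cref{lem:Psub_strucsub}. Your idea of an intrinsic recipe would indeed sidestep that issue -- but only once the recipe actually produces a nested distinguishing set, which, as it stands, it does not.
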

There are some technical subtleties in the formulation of~\cref{thm:canonical_old} due to the fact that neither the profile property nor structural submodularity need be preserved by isomorphisms of separation systems. To avoid these difficulties, we obtain~\cref{thm:canonical_old} by first establishing the following more general but somewhat more technical result, which slightly weakens the definitions of submodularity and profiles in order to make them compatible with such isomorphisms:

\begin{THM}\label{thm:canonical}
	Let~$ \vS $ be a separation system and $\cP$ a collection of consistent orientations of $\vS$ such that $\vS$ is $\cP$-submodular. Then there is a nested set~$ N=N(\vS,\cP)\sub S $ which distinguishes~$ \cP $. This~$ N(\vS,\cP) $ can be chosen canonically: if~$ \phi\colon\vS\to\vS' $ is an isomorphism of separation systems and~$ \cP'\coloneqq\menge{\phi(P)\mid P\in\cP} $ then~$ \phi(N(\vS,\cP))=N(\vS',\cP') $.
\end{THM}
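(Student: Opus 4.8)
The plan is to run the standard ``iteratively extract an efficient distinguishing separation and nest it'' strategy, but to make every choice canonical by taking, at each stage, \emph{all} separations of a certain minimal kind rather than a single representative. Concretely, I would first fix a well-ordering-free notion of ``how hard two profiles are to tell apart'': for profiles $P,Q\in\cP$ call a separation $s$ an \emph{$(P,Q)$-distinguisher} if $s$ orients towards different sides under $P$ and $Q$, and call such an $s$ \emph{efficient} if no distinguisher of $P,Q$ is strictly smaller --- where ``smaller'' is measured not by an external order function (we have none) but by some canonical structural invariant, e.g. the pair $(\text{number of profiles in }\cP\text{ that }s\text{ fails to distinguish from both sides})$ together with set-theoretic data that is visibly isomorphism-invariant. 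The key point is that $\cP$-submodularity guarantees, for any two crossing distinguishers, that one of the four corners is again in $\vS$ and is still a distinguisher of \emph{some} pair it needs to be, so efficient distinguishers can be ``uncrossed''.

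The heart of the argument is the uncrossing lemma: if $s$ is an efficient $(P,Q)$-distinguisher and $t$ is an efficient $(P',Q')$-distinguisher and $s,t$ cross, then one of the corner separations obtained from $s$ and $t$ via the join/meet guaranteed by $\cP$-submodularity is again an efficient distinguisher of one of the two pairs, and it is nested with the other separation. Here the consistency of the orientations in $\cP$ and the (weakened) profile property are exactly what is needed to show that the relevant corner still separates the right pair and is still efficient --- this is where I expect the real work to be, and it is essentially the $\cP$-submodular analogue of the uncrossing in \cite{AbstractTangles}, except that we must be careful to phrase everything so that no arbitrary choice among the corners is made: either we can show a \emph{canonical} one of the corners works, or we throw \emph{both} candidate corners into the pool and argue the pool still behaves. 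I would then define
\[
  N(\vS,\cP)\ \coloneqq\ \bigcup_{\{P,Q\}\sub\cP}\ \menge{\,s\in\vS : s\text{ is an efficient }(P,Q)\text{-distinguisher}\,}
\]
and prove: (i) $N$ distinguishes $\cP$, which is immediate since for each pair some distinguisher exists, hence an efficient one exists; (ii) $N$ is nested, which follows by applying the uncrossing lemma repeatedly --- any crossing pair in $N$ would let us produce a smaller distinguisher, contradicting efficiency, provided the invariant used to define ``efficient'' is chosen so that corners are never larger; (iii) $N$ is canonical, which is automatic from the construction: every ingredient --- the set $\cP$, the profile/consistency conditions, the crossing relation, the structural invariant --- is preserved by any isomorphism $\phi\colon\vS\to\vS'$ of separation systems, so $\phi$ maps efficient distinguishers to efficient distinguishers and hence $\phi(N(\vS,\cP))=N(\vS',\cP')$.

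The main obstacle, and the reason \cref{thm:canonical} is stated with the weakened hypotheses, is step (ii) combined with canonicity: in \cite{AbstractTangles} the nestedness is obtained by an iterative procedure that at each step discards a crossing separation in favour of an uncrossed one, and such a procedure generically makes arbitrary choices. To avoid this I would not iterate at all but instead take the full set of efficient distinguishers at once and show directly that it is already nested --- which forces the uncrossing lemma to be proved in the strong form ``if $s,t\in N$ cross then we reach an outright contradiction with efficiency'', rather than the weak form ``we can replace $s$ by something better''. Making the uncrossing lemma yield a contradiction rather than an improvement is the delicate part, and it is precisely here that one needs the right canonical structural measure of complexity for separations: it must be submodular-like enough that all four corners of a crossing pair are no more complex than the more complex of $s,t$, yet fine enough that at least one corner is \emph{strictly} simpler whenever $s,t$ genuinely cross. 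I would expect this measure to be something like ``the set of pairs of profiles not distinguished by $s$'' ordered by inclusion, refined as necessary, and checking that it has the required submodularity under the $\cP$-submodular corner operation is the technical crux of the proof.
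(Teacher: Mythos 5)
There is a genuine gap, and it sits exactly where you locate the ``technical crux'': the nestedness of your set $N$ of all efficient distinguishers. With no order function available, your proposed complexity measure (the set, or number, of pairs of orientations in $\cP$ that a separation fails to distinguish) is far too coarse to make the uncrossing argument yield a contradiction. Already for $\abs{\cP}=2$ every separation distinguishing the two orientations is ``efficient'' under any such measure, so your $N$ would be the set of \emph{all} distinguishers of that pair, which is typically not nested. More generally, even with a genuine submodular order function the set of all minimum-order distinguishers of the pairs in $\cP$ need not be nested -- this is precisely why canonical tree-of-tangles theorems are hard -- so the strong form of your uncrossing lemma (``crossing efficient distinguishers give an outright contradiction'') cannot hold: $\cP$-submodularity only hands you a corner that is \emph{no worse}, never one that is \emph{strictly better}, and minimality with respect to a partial order (inclusion of undistinguished pairs) is compatible with the corner being incomparable rather than smaller. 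Deferring this to a measure ``refined as necessary'' is deferring the entire content of the theorem.

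The paper avoids this obstacle by not looking for canonical distinguishers of \emph{pairs} at all. Instead it works with separations that are \emph{exclusive} for $\cP$ (lying in exactly one $P\in\cP$), takes for each such $P$ the set $M_P$ of maximal exclusive separations in $P$, and shows three things: some $M_P$ is non-empty (\cref{lem:nonempty}), elements of distinct $M_P$'s are automatically nested (\cref{lem:nestedbetween}), and although $M_P$ itself may consist of pairwise crossing separations, it has an infimum $\vs_P$ in the poset $\vS$ which is again $P$-exclusive and inherits nestedness (\cref{lem:infimum}); here the $\cP$-join option is excluded by maximality of the elements of $M_P$, so the $\cP$-meet must exist -- this is how strictness is obtained without any order function. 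These infima form a canonical nested layer $N_1$, and the theorem follows by recursing on the orientations with empty $M_Q$ inside the subsystem $\vS'$ of separations nested with all $M_P$ (\cref{lem:restriction}), the recursion being canonical because $M_P$, $\vS'$ and $\cP'$ are invariants. So the construction is iterative after all, but each iteration makes no arbitrary choice -- the idea you would need to recover, and which your one-shot ``collect all efficient distinguishers'' plan does not supply, is the passage from the (crossing, but invariant) set $M_P$ to its single canonical infimum.
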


\cref{thm:canonical_old} is then an immediate corollary of~\cref{thm:canonical}.

This paper is structured as follows: in~\cref{sec:defs} we recall the relevant definitions of separation systems and profiles. In~\cref{sec:P-submodularity} we introduce the new definition required for~\cref{thm:canonical} and show that~\cref{thm:canonical_old} indeed is an immediate corollary of~\cref{thm:canonical}. In~\cref{sec:proof} we prove~\cref{thm:canonical_old,thm:canonical}.

\section{Separation Systems and Profiles}\label{sec:defs}

For a full introduction to abstract tangle theory and its terminology and notation we refer the reader to~\cite{AbstractSepSys,AbstractTangles}. In the remainder of this section we offer a brief introduction of only those terms and notation of tangle theory that are relevant to this paper.

A~\emph{separation system}~$ \vS=(\vS,\le,^*) $ is a poset~$ (\vS,\le) $ together with an order-reversing involution~$ ^* $ on~$ \vS $. Given an element~$ \vs $ of~$ \vS $ we denote its~\emph{inverse}, its image under~$ ^* $, by~$ \sv=(\vs)^* $. For~$ ^* $ to be order-reversing then means that~$ \vr\le\vs $ if and only if~$ \rv\ge\sv $ for all~$ \vr,\vs\in\vS $.

We refer to the elements of~$ \vS $ as~\emph{oriented separations}. Given such an oriented separation~$ \vs $ we call~$ \{\vs,\sv\} $ its underlying~\emph{unoriented separation} and denote it by~$ s $. Conversely, given an unoriented separation~$ s $, we call~$ \vs $ and~$ \sv $ the two~\emph{orientations} of~$ s=\{\vs,\sv\} $.

If the context is clear we often refer to both oriented and unoriented separations simply as~\emph{separations}. Moreover, when no confusion is possible, we may informally use terms defined for unoriented separations for oriented separations as well and vice-versa. For a set~$ \vS $ of oriented separations we write~$ S $ for the set of all~$ s $ with~$ \vs\in\vS $. Conversely, if~$ S $ is a set of unoriented separations, we write~$ \vS $ for the set of all~$ \vs $ and~$ \sv $ with~$ s\in S $.

Two separations~$ r $ and~$ s $ are~\emph{nested} if they admit orientations~$ \vr $ and~$ \vs $ with~$ \vr\le\vs $. If~$ r $ and~$ s $ are nested we also call all their respective orientations nested. Two separations that are not nested are said to~\emph{cross}. Note that two oriented separations~$ \vr $ and~$ \vs $ need not be themselves comparable in order to be nested: we may have for instance that~$ \vr\le\sv $. A set of separations is~\emph{nested} if its elements are pairwise nested.

We say that two oriented separations~$ \vr $ and~$ \vs $~\emph{point towards each other} if~$ \vr\le\sv $, and that they~\emph{point away from each other} if~$ \rv\le\vs $. Thus~$ \vr $ and~$ \vs $ are nested if and only if they are either comparable, or point towards each other, or point away from each other.

If~$ \vU=(\vU,\le,^*) $ is a separation system whose poset is a lattice with pairwise join and meet operations~$ \join $ and~$ \meet $ we call~$ (\vU,\le,^*,\join,\meet) $ a~\emph{universe of separations}. Given~$ r $ and~$ s $ in~$ U $ we call all separations of the form~$ \vr\join\vs $ as well as their inverses and underlying separations~\emph{corner separations} of~$ r $ and~$ s $. Note that in universes of separations DeMorgan's rule holds:
\[ (\vr\join\vs)^*=(\rv\meet\sv) \]

\noindent For a universe~$ \vU=(\vU,\le,^*,\join,\meet) $ of separations and a separation system~$ \vS\sub\vU $ we say that~$ \vS $ is~\emph{(structurally) submodular in $\vU$} if for all~$ \vr $ and~$ \vs $ in~$ \vS $ at least one of~$ \vr\join\vs $ and~$ \vr\meet\vs $ is also contained in~$ \vS $.

An~\emph{orientation} of a set~$ S $ of unoriented separations is a set~$ O\sub\vS $ containing exactly one orientation~$ \vs $ or~$ \sv $ of each~$ s=\{\vs,\sv\} $ in~$ S $. We call~$ O $~\emph{consistent} if there are no~$ \vr $ and~$ \vs $ in~$ O $ with~$ \rv\le\vs $ and~$ r\ne s $.

Given a universe~$ \vU $ of separations and a separation system~$ \vS\sub\vU $, a~\emph{profile} of~$ S $ is a consistent orientation of~$ S $ such that
\[ \forall \,\vr,\vs\in P \colon (\rv\meet\sv)\notin P \tag{P}\,.\label[property]{property:P} \]
This~\cref{property:P} is commonly referred to as the~\emph{profile property}.

A separation~$ s $~\emph{distinguishes} two profiles~$ P $ and~$ Q $ of~$ S $ if there is an orientation~$ \vs $ of~$ s $ with~$ \vs\in P $ and~$ \sv\in Q $. 
A set~$ N $~\emph{distinguishes} a set~$ \cP $ of profiles of~$ S $ if every pair of distinct profiles in~$ \cP $ is distinguished by some~$ s $ in~$ N $.

A set~$ N\sub S $ of unoriented separations is a~\emph{tree set} if~$ N $ is nested and there are no~$ r\neq s\in N$ with orientations $\vr$ and $\vs$ such that~$ \vr\le\vs $ and~$ \vr\le\sv $. Note that if~$ N $ is nested and has the property that each separation in~$ N $ distinguishes some pair of profiles of~$ S $, then~$ N $ is a tree set.

Finally, an~\emph{isomorphism} of separation systems~$ \vS $ and~$ \vS' $ is a bijective map~$ \phi\colon\vS\to\vS' $ such that~$ \phi(\sv)=(\phi(\vs))^* $ for all~$ \vs $ in~$ \vS $ and furthermore~$ \vr\le\vs $ if and only if~$ \phi(\vr)\le\phi(\vs) $ for all~$ \vr $ and~$ \vs $ in~$ \vS $.

The most prominent occurrence of separation systems is in graphs: given a graph~${G=(V,E)}$, a separation of~$ G $ is a pair~$ (A,B) $ of vertex sets with~$ A\cup B=V $ such that there is no edge from~$ A\sm B $ to~$ B\sm A $. With involution~$ (A,B)^*=(B,A) $ and partial order~$ (A,B)\le(C,D) $ if and only if~$ A\sub C $ and~$ B\supseteq D $, the set of all separations of~$ G $ becomes a separation system -- in fact, a universe of separations.

Of special importance in the theory of graph separations are the separation systems~$ \vS_k $ of all separations~$ (A,B) $ of~$ G $ with~$ \abs{A\cap B}<k $. Each such~$ \vS_k $ constitutes a structurally submodular separation system by our definition. The~$ k $-tangle of a graph, whose introduction and study by Robertson and Seymour in~\cite{GMX} was the inception of tangle theory, is then a special type of consistent orientation of~$ S_k $. Each such~$ k $-tangle is a profile of~$ S_k $ in our sense. Profiles, both in graphs and in abstract separation systems, were first rigorously studied in~\cite{ProfilesNew}: that work also contains a plethora of examples and applications of profiles, including an example of a profile in a graph that is not a~$ k $-tangle~(\cite{ProfilesNew}*{Example~7}).

The separation systems considered in~\cite{ProfilesNew}, although a generalisation of graph separations, still come with some strong structural assumptions that are closely modelled on graph separations. The first paper to study profiles in universes of separations as defined here, without any additional structural assumptions, was~\cite{AbstractTangles}. That work, too, contains some examples of profiles in separation systems, as well as multiple applications of the theory of tangles and profiles both inside and outside of graph theory. The applications presented in~\cite{AbstractTangles} include profiles in matroids and using tangles for cluster analysis, as well as working with more exotic types of separations such as clique separations in graphs or circle separations outside of graphs.

Both~\cite{ProfilesNew} and~\cite{AbstractTangles} provide their own version of~\cref{thm:canonical_old}. The version given in~\cite{ProfilesNew}, like our~\cref{thm:canonical_old}, constructs a~\emph{canonical} tree set distinguishing a given set of profiles, but does so by leveraging a much stronger set of assumptions. Conversely, the version \cref{thm:Daniel} of~\cref{thm:canonical_old} given in~\cite{AbstractTangles} does not yield canonicity.

\section{Submodularity with respect to a set of profiles}\label{sec:P-submodularity}

The first hurdle to overcome when aiming for a canonical version of~\cref{thm:Daniel} is to pin down what exactly `canonical' ought to mean. At first glance this is obvious: the construction of the nested set~$ N $ shall use only invariants of~$ \vS $ and~$ \cP $, that is, properties which are preserved by isomorphisms of separation systems. This approach, however, runs into a subtle difficulty: the definitions of both structural submodularity and profiles depend on~$ \vS $ being embedded into an ambient universe of separations, whose existence~\cref{thm:Daniel} implicitly assumes. An isomorphism~$ \phi\colon\vS\to\vS' $ of separation systems, though, need not preserve such an embedding, which leads to the undesirable situation that a construction isomorphic to that of~$ N $ in~$ \vS $ could be carried out in~$ \vS' $, even though~\cref{thm:canonical_old} may not be directly applicable to~$ \vS' $ due to differences in their embeddings into ambient lattices.

To make our canonical version of~\cref{thm:Daniel} as widely applicable as possible, and to keep the definition of canonicity as straightforward and clean as possible, we must therefore tweak the assumptions of structural submodularity and profiles of~$ S $ in such a way that they no longer depend on any embedding into a universe of separations, and are themselves invariants of isomorphisms between separation systems. This is made possible by the following observation: the proof of~\cref{thm:Daniel} makes use of the assumptions that~$ \vS $ is submodular and~$ \cP $ a set of profiles solely to deduce that whenever some~$ \vr $ and~$ \vs $ in~$ \vS $ distinguish some two profiles in~$ \cP $, then either their meet or their join (as provided by the ambient lattice) is contained in~$ \vS $ and likewise distinguishes that pair of profiles.

For our canonical~\cref{thm:canonical} we will thus eliminate the need for an ambient universe by asking of~$ \vS $ and~$ \cP $ that they have this property, with the meets or joins now being taken directly in the poset~$ \vS $. Expressed solely in terms of~$ \vS $ and~$ \cP $, this `richness' property is then preserved by isomorphisms of separation system, independently of any embeddings into lattice structures. This solves the minor problem in the formulation of~\cref{thm:canonical_old} of~$ \vS' $ not meeting the assumption of the theorem despite being isomorphic to a separation system which does.~\cref{thm:canonical_old} is then obtained as a corollary of~\cref{thm:canonical}.

Let~$ \vS $ be a separation system and~$ \cP $ a set of consistent orientations of~$ \vS $. Given a set~$ M\sub\vS $ of oriented separations, an element~$ \vr\in\vS $ is an~\emph{infimum} of~$ M $ in~$ \vS $ if~$ \vr\le\vs $ for each~$ \vs\in M $ and additionally~$ \vr\ge\vt $ whenever~$ \vt\in\vS $ is such that~$ \vt\le\vs $ for all~$ \vs\in M $. Dually, an element~$ \vr\in\vS $ is a~\emph{supremum} of~$ M $ in~$ \vS $ if~$ \vr\ge\vs $ for each~$ \vs\in M $ and additionally~$ \vr\le\vt $ whenever~$ \vt\in\vS $ is such that~$ \vt\ge\vs $ for all~$ \vs\in M $. In general a set~$ M\sub\vS $ need not have such an infimum or supremum in~$ \vS $.

Given two separations~$ \vr $ and~$ \vs $ in~$ \vS $ we denote the infimum and supremum of~$ \menge{\vr,\vs} $ in~$ \vS $ by~$ \vr\meet\vs $ and~$ \vr\join\vs $, respectively, if those exist. Observe that~$ (\vr\join\vs)^\ast=\rv\meet\sv $.

If~$ \vr $ and~$ \vs $ have a supremum~$ \vr\join\vs $ in~$ \vS $, and every~$ P\in\cP $ containing both~$ \vr $ and~$ \vs $ also contains~$ \vr\join\vs $, then we call~$ \vr\join\vs $ a~\emph{$ \cP $-join} of~$ \vr $ and~$ \vs $ in~$ \vS $. Dually we call~$ \vr\meet\vs $ a~\emph{$ \cP $-meet} of~$ \vr $ and~$ \vs $ if~$ (\vr\meet\vs)^\ast\in P $ for each~$ P\in\cP $ containing both~$ \rv $ and~$ \sv $.

Finally, we say that~$ \vS $ is~\emph{$ \cP $-submodular} if every two crossing separations~$ \vr $ and~$ \vs $ in~$ \vS $ have a~$ \cP $-join or~$ \cP $-meet in~$ \vS $. For the remainder of this work we assume~$ \vS $ to be~$ \cP $-submodular.

Observe that~$ \cP $-submodularity is preserved by isomorphisms of separation systems: if~$ \phi\colon\vS\to\vS' $ is an isomorphism and~$ \cP'\coloneqq\menge{\phi(P)\mid P\in\cP} $, then~$ \vS' $ is~$ \cP' $-submodular. Using this notion of submodularity we can therefore meaningfully express canonicity in the context of~\cref{thm:Daniel}.

Note, however, that we are not assuming the elements of~$ \cP $ to be profiles of~$ \vS $: this is precisely because we prove~\cref{thm:canonical} without an ambient lattice structure, which would be necessary to define profiles. Therefore,~\cref{thm:canonical} improves on~\cref{thm:Daniel} not only by offering a canonical way of constructing~$ N $, but also by being applicable to an even larger number of separation systems.

Before getting to the proof of~\cref{thm:canonical} itself, let us demonstrate that it is in fact a strengthening of~\cref{thm:Daniel} by showing that~\cref{thm:canonical} implies~\cref{thm:canonical_old}. The following lemma does just that by proving that in the setting of~\cref{thm:Daniel} the assumptions of~\cref{thm:canonical} are satisfied:

\begin{LEM}\label{lem:Psub_strucsub}
	Let~$ \vS $ be a structurally submodular separation system inside some universe~$ \vU $ of separations and~$ \cP $ a set of profiles of~$ \vS $. Then~$ \vS $ is~$ \cP $-submodular.
\end{LEM}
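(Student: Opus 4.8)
The plan is to show directly that the structural submodularity of $\vS$ in $\vU$, together with the profile property of the elements of $\cP$, gives us exactly the $\cP$-join or $\cP$-meet required. So let $\vr$ and $\vs$ be two crossing separations in $\vS$. By structural submodularity in $\vU$ we know that at least one of $\vr\join\vs$ and $\vr\meet\vs$ — taken in the ambient universe $\vU$ — lies in $\vS$; by symmetry (replacing $\vr$ by $\rv$ and $\vs$ by $\sv$ and using DeMorgan) we may assume it is $\vr\join\vs$ that lies in $\vS$. The first thing I would check is that this ambient join $\vr\join\vs$ is in fact the supremum of $\menge{\vr,\vs}$ \emph{in the poset $\vS$} as well: it is certainly an upper bound of $\vr$ and $\vs$ inside $\vS$, and any $\vt\in\vS$ with $\vt\ge\vr$ and $\vt\ge\vs$ is in particular such an element of $\vU$, so $\vt\ge\vr\join\vs$ by the universality of the join in $\vU$. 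Hence $\vr\join\vs$ really is the supremum in $\vS$ in the sense defined just before the lemma.

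It then remains to verify the $\cP$-condition: for every $P\in\cP$ that contains both $\vr$ and $\vs$, we must have $\vr\join\vs\in P$. Here is where the profile property enters. Fix such a $P$. Since $P$ is an orientation of $S$ and $\vr\join\vs$ is (the orientation of) a separation of $\vS$, either $\vr\join\vs\in P$ or its inverse $(\vr\join\vs)^\ast=\rv\meet\sv\in P$. The plan is to rule out the second alternative. If $\rv\meet\sv$ were in $P$, then since also $\vr,\vs\in P$, applying~\cref{property:P} to the pair $\vr,\vs\in P$ yields $(\rv\meet\sv)\notin P$ — a direct contradiction. Therefore $\vr\join\vs\in P$, as required, and $\vr\join\vs$ is a $\cP$-join of $\vr$ and $\vs$ in $\vS$.

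Since $\vr$ and $\vs$ were an arbitrary crossing pair in $\vS$, this shows $\vS$ is $\cP$-submodular. The only mild subtlety — and the step I would be most careful about — is the bookkeeping around the symmetric case: when structural submodularity hands us $\vr\meet\vs\in\vS$ rather than $\vr\join\vs\in\vS$, one should check that the dual argument (now invoking~\cref{property:P} for the pair $\rv,\sv\in P$, which are in $P$ exactly when $\vr\meet\vs\notin P$ fails in the wrong direction) delivers a $\cP$-meet; this is entirely routine given DeMorgan's rule $(\vr\join\vs)^\ast=\rv\meet\sv$, but it is worth spelling out so that the orientations are tracked correctly. Beyond that, the proof is short: the real content is the single application of the profile property.
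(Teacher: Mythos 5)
Your proof is correct and follows essentially the same route as the paper's: use structural submodularity to get (say) the ambient join $\vr\join\vs$ in $\vS$, observe that it is then also the supremum taken in the poset $\vS$, and apply the profile property to conclude that every $P\in\cP$ containing $\vr$ and $\vs$ contains $\vr\join\vs$, with the meet case handled dually. Your write-up merely spells out two steps the paper states tersely (that the ambient supremum is the supremum in $\vS$, and that $P$, as an orientation, must contain $\vr\join\vs$ once $(\vr\join\vs)^\ast=\rv\meet\sv$ is excluded by~\cref{property:P}).
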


\begin{proof}
	We must show that any~$ \vr $ and~$ \vs $ in~$ \vS $ have a~$ \cP $-meet or~$ \cP $-join. So let~$ \vr $ and~$ \vs $ in~$ \vS $ be given. 
	Since~$ \vS $ is structurally submodular it contains the infimum~$ \vr\meet\vs $ or the supremum~$ \vr\join\vs $ of~$ \vr $ and~$ \vs $ in~$ \vU $. Let us assume the latter; the other case is dual. Then~$ \vr\join\vs $ is also the supremum of~$ \vr $ and~$ \vs $ as taken in~$ \vS $. Moreover, by the profile property, every~$ P\in\cP $ containing both~$ \vr $ and~$ \vs $ also contains~$ \vr\join\vs $, making this a~$ \cP $-join of~$ \vr $ and~$ \vs $ in~$ \vS $.
\end{proof}

\section{\texorpdfstring{Proof of~\cref{thm:canonical_old,thm:canonical}}{Proof of Theorem 2 and 3}}\label{sec:proof}
In this section we will prove~Theorem~\ref{thm:canonical_old} and~Theorem~\ref{thm:canonical}. To this end, for the remainder of this paper let~$\vS$ be a separation system and let~$\cP$ be a set of consistent orientations of~$\vS$ such that~$\vS$ is~$\cP$-submodular.

A common tool in proving tree-of-tangles theorems is the so-called fish lemma \cite{AbstractSepSys}*{Lemma~3.2}. Since this lemma is usually formulated in the context of a separation system that is contained in a universe of separations, we need to prove our own version of this lemma.

\begin{LEM}[See also \cite{AbstractSepSys}*{Lemma 3.2}]\label{lem:fish}
 	Let~$r,s\in S$ be two crossing separations in~$ S $ and let~$t\in S$ be a separation that is nested with both~$r$ and~$s$. Given orientations~$\vr$ and~$\vs$ of~$r$ and~$s$ such that there exists a supremum~$\vr\join \vs$ of~$\vr$ and~$\vs$ in~$\vS$, the separation~$\vr\join \vs$ is nested with~$t$. The same is true for~$ \vr\meet\vs $.
\end{LEM}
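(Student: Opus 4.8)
The plan is to prove the statement for $\vr\join\vs$; the case of $\vr\meet\vs$ then follows by applying the $\join$-case to the inverses $\rv$ and $\sv$, using that $(\vr\meet\vs)^\ast = \rv\join\sv$ and that nestedness is invariant under taking inverses. So suppose $\vr\join\vs$ exists in $\vS$. Since $t$ is nested with $r$, we may fix an orientation $\vt$ of $t$ that is nested with $\vr$; and since $t$ is nested with $s$, one of the two orientations of $t$ is nested with $\vs$. I would first reduce to understanding how a fixed orientation, say $\vt$, of $t$ relates to $\vr$ and to $\vs$. The key point is that "nested with a separation" means comparable-with, or points-towards, or points-away-from one of its orientations; so for each of $\vr,\vs$ there is some relation of this kind with $\vt$ or with $\tv$.

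\medskip

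\noindent The core of the argument is the following case analysis on the orientation of $t$. \textbf{Case 1: $\vt\ge\vr$ and $\vt\ge\vs$ (for the same orientation $\vt$).} Then $\vt$ is an upper bound of $\{\vr,\vs\}$ in $\vS$, so by definition of the supremum $\vr\join\vs\le\vt$, and hence $\vr\join\vs$ is nested with $t$. \textbf{Case 2: $\vt\le\vr$ (equivalently $\vr\ge\vt$).} Then $\vr\join\vs\ge\vr\ge\vt$, so again $\vr\join\vs$ is nested with $t$; symmetrically if $\vt\le\vs$. \textbf{Case 3: the remaining mixed cases}, for instance $\vt\ge\vr$ but $\vt$ and $\vs$ point away from each other, i.e. $\tv\le\vs$, or $\vt\le\vr$ while $\vt\ge\vs$, and so on. The crossing hypothesis on $r$ and $s$ is used precisely here: if $\vr$ and $\vs$ were themselves comparable then $\vr\join\vs\in\{\vr,\vs\}$ and nestedness with $t$ is immediate, so we may assume $\vr,\vs$ are incomparable in both orientations. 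In each mixed case I would extract from the two relations (one between $\vt^{(\pm)}$ and $\vr$, one between $\vt^{(\pm)}$ and $\vs$) either an upper bound of $\{\vr,\vs\}$, forcing $\vr\join\vs$ below it, or a lower bound of the form $\vt$ below one of $\vr,\vs$ and hence below $\vr\join\vs$; the order-reversing involution lets me flip a "points away" relation $\tv\le\vs$ into $\sv\le\vt$ and combine it with the $\vr$-side relation.

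\medskip

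\noindent The main obstacle is bookkeeping: there are nominally many sub-cases according to which of $\vt,\tv$ is nested with $\vr$, which with $\vs$, and whether each pair is comparable, points-towards, or points-away; and one must check that the orientation of $t$ witnessing nestedness with $r$ is compatible with the one witnessing nestedness with $s$ (a priori they could be the two different orientations $\vt$ and $\tv$). The clean way to handle this is to observe that it suffices to exhibit, for $t$ and the separation $\vr\join\vs$, one pair of orientations that are comparable; so I do not need to track all relations simultaneously, only to find, in each configuration, a single orientation $\vt^{\pm}$ of $t$ that is either $\ge$ some common upper bound situation yielding $\vr\join\vs\le\vt^{\pm}$, or $\le\vr$ or $\le\vs$ yielding $\vt^{\pm}\le\vr\join\vs$. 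Grouping the cases by "does some orientation of $t$ lie above both $\vr$ and $\vs$" versus "does some orientation of $t$ lie below $\vr$ or below $\vs$" collapses the analysis to essentially the three cases above, and the crossing assumption on $r,s$ together with the nestedness of $t$ with each of $r,s$ guarantees that one of these two alternatives always occurs. I would write this out carefully but expect it to be short once the right grouping is chosen.
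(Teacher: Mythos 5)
Your proposal is correct and follows essentially the same route as the paper: the dichotomy "some orientation of $t$ lies below $\vr$ or $\vs$ (hence below $\vr\join\vs$)" versus "by crossing, a single orientation of $t$ lies above both $\vr$ and $\vs$ (hence above the supremum)" is exactly the paper's two-case argument, with the $\meet$ case handled by duality. The only difference is presentational: the paper collapses your case bookkeeping into these two cases from the start.
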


\begin{proof}
 If $t$ has an orientation $\vt$ such that $\vt\le \vr$ or $\vt\le \vs$ then clearly $\vt\le (\vr\join \vs)$. Otherwise, since $r$ and $s$ cross, there must be an orientation $\vt$ of $t$ such that $\vr\le \vt$ and $\vs\le \vt$. Thus, by the fact that $\vr\join \vs$ is the supremum of $\vr$ and $\vs$ in $\vS$, we have that~$(\vr\join \vs)\le \vt$.
\end{proof}

Let us say that a separation~$ \vs\in\vS $ is~\emph{exclusive (for~$ \cP $)} if it lies in exactly one orientation in~$ \cP $. If~$ P\in\cP $ is the orientation containing an exclusive separation~$ \vs $ then we might also say that~$ \vs $ is~\emph{$ P $-exclusive (for~$ \cP $)}. Observe that if~$ \vr $ is~$ P $-exclusive for~$ \cP $, then so is every~$ \vs\in P $ with~$ \vr\le\vs $.

For each~$ P\in\cP $ let~$ M_P $ consist of the maximal elements of the set of all~$ P $-exclusive separations. Equivalently,~$ M_P $ is the set of all maximal elements of~$ P $ that are exclusive for~$ \cP $.

Our strategy for proving~\cref{thm:canonical} will be to canonically pick nested $ P $-exclusive representatives of all orientations~$ P\in\cP $ that contain exclusive separations, then discard from~$ \cP $ and~$ \vS $ all those orientations~$ P $ for whom we selected a representative and all those separations not nested with these representatives, respectively. Iterating this procedure will yield the canonical nested set.

In order for this strategy to work we must ensure that the sets~$ M_P $ are not all empty. Our first lemma addresses this:

\begin{LEM}\label{lem:nonempty}
	If~$ \cP $ and $S$ are non-empty, then some~$ M_P $ is non-empty.
\end{LEM}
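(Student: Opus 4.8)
The plan is to argue by contradiction: suppose every $M_P$ is empty, i.e.\ no $P \in \cP$ contains an exclusive separation at all. I will use this hypothesis to construct, step by step, a sequence of separations witnessing that some infinite descent or some contradiction with $\cP$-submodularity must occur — more carefully, I want to pick a suitable separation and track where its orientations can live across the various members of $\cP$, eventually forcing a separation that no orientation in $\cP$ can contain, contradicting the fact that each $P \in \cP$ is an orientation of all of $\vS$.

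Here is the concrete route I would take. Fix any $s \in S$; since $S$ is non-empty this is possible. If the hypothesis holds, neither orientation $\vs$ nor $\sv$ is exclusive, so $\vs$ lies in at least two members of $\cP$ and $\sv$ lies in at least two members of $\cP$. The key step is to take two orientations $P, Q \in \cP$ that are \emph{distinguished} by $s$, say $\vs \in P$ and $\sv \in Q$; such $P,Q$ exist as soon as $\vs$ and $\sv$ each appear, provided $\cP$ is not a single orientation. (If $\abs{\cP}=1$, the statement needs separate, easy handling: with a single orientation $P=P_0$, every separation in $P_0$ is $P_0$-exclusive, so $M_{P_0}$ is the set of maximal elements of $P_0$, which is non-empty as long as $S$ is — here I would invoke finiteness or Zorn's lemma on the poset $P_0$ to get a maximal element.) Now among all separations in $S$ distinguishing some pair in $\cP$, pick one, say $s$, which is $\le$-maximal in the sense that its orientation $\vs$ witnessing $\vs \in P$ is not below any other distinguishing separation's orientation lying in $P$; the idea is that a maximal distinguishing separation oriented into $P$ should in fact be $P$-exclusive, because any $P' \ne P$ also containing that orientation $\vs$ would have to be "separated from $P$ higher up", contradicting maximality. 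This is the step I expect to be the main obstacle: turning "$\vs$ lies in $\ge 2$ members of $\cP$" into a genuine contradiction requires carefully choosing the right extremal separation and then applying the fish lemma (\cref{lem:fish}) together with $\cP$-submodularity to merge two distinguishing separations into a corner separation that is "more exclusive", driving the maximality argument.

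In more detail: suppose $\vr$ and $\vs$ both distinguish pairs in $\cP$ and both are maximal among such, but $\vs$ is non-exclusive, lying in $P$ and in $P' \ne P$. Pick a separation $\vt$ distinguishing $P$ from $P'$ with $\vt \in P$, $\tv \in P'$. If $t$ is nested with $s$, then since $\vs,\tv$ cannot point the wrong way (consistency of $P'$ rules out $\tv \le \vs$ with $t\ne s$ only in one direction, so some corner behaves well), we can use consistency to derive $\vs \le \vt$ or $\vt \le \vs$, and combined with $\cP$-submodularity applied to $r,s$ or to $s,t$ when they cross, the resulting $\cP$-join or $\cP$-meet lies in $\vS$, still distinguishes the relevant pair by the definition of $\cP$-join/$\cP$-meet, and strictly dominates $\vs$ — contradicting maximality of $s$. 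The technical heart is the bookkeeping of which orientation of the corner separation lands in which $P$, which is exactly what the $\cP$-join and $\cP$-meet definitions were set up to control: if $\vs,\vt \in P$ then $\vs\join\vt \in P$, and if $\sv,\tv \in P'$ then $(\sv\meet\tv) \in P'$, i.e.\ $\sv\meet\tv = (\vs\join\vt)^\ast \in P'$, so the corner separation $s'$ with $\vs' = \vs\join\vt$ still distinguishes $P$ from $P'$ while $\vs < \vs'$. Once this single improvement step is in place, a maximal distinguishing separation in $\cP$ is forced to be exclusive, and then its maximal upper bound in the poset (again via Zorn or finiteness) lies in the corresponding $M_P$, which is therefore non-empty.
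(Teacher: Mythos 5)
Your overall plan (extremal choice plus an improvement step via corner separations) is reasonable, and several pieces are fine: the $\abs{\cP}=1$ case, the existence of a distinguishing separation when $\abs{\cP}\ge 2$, and the nested case of the improvement step can indeed be pushed through with consistency. But the crossing case — which you yourself flag as the main obstacle — has a genuine gap, and your detailed paragraph does not close it. $\cP$-submodularity applied to the crossing pair $\vs,\vt$ (or to $\vs,\tv$) only guarantees a $\cP$-join \emph{or} a $\cP$-meet, and your argument only works in the join case: if merely the $\cP$-meet $\vs\meet\vt$ exists, it lies \emph{below} $\vs$, so it cannot contradict maximality, and by consistency both $P$ and $P'$ contain $\vs\meet\vt$ (each contains $\vs$), so it does not distinguish $P$ from $P'$ either. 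Your bookkeeping sentence ``if $\sv,\tv\in P'$ then $(\sv\meet\tv)\in P'$'' rests on a false premise: in your configuration $P'$ contains $\vs$, not $\sv$ — that is exactly what non-exclusivity of $\vs$ means — so the defining property of a $\cP$-meet (which speaks about orientations containing $\sv$ and $\tv$) says nothing about $P'$. One can check that it is consistent with $\cP$-submodularity that for both oriented pairs $(\vs,\vt)$ and $(\vs,\tv)$ only the meets exist, and then your extremal argument stalls: the only new distinguishing corners you can produce lie above $\sv$, not above $\vs$, and maximality of $\vs$ gives no contradiction.

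This asymmetry is precisely what the paper's proof is engineered to avoid: it proceeds by induction on $\abs{\cP}$ with the strengthened claim that for every $P$ there is a separation exclusive-but-not-$P$-exclusive, which yields \emph{two} separations $\vr,\vs$ exclusive for $\cP\sm\menge{P}$ lying in \emph{distinct} orientations $Q\ne Q'$. For the crossing pair $\vr,\sv$ that configuration is symmetric under swapping $(r,Q)\leftrightarrow(s,Q')$, so ``without loss of generality the $\cP$-join exists'' is legitimate there, and the resulting corner is exclusive and avoids $P$. To repair your proof you would either need to import a similar symmetry (e.g.\ work with two maximal distinguishing separations in different members of $\cP$ and apply submodularity to the pair pointing away from each other), or find a separate argument for the meet-only case; as written, the step ``a maximal distinguishing separation oriented into $P$ is $P$-exclusive'' is unproven.
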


In the case of $\vS$ being submodular in some universe of separation $\vU$ and $\cP$ being a set of profiles of $\vS$, the existence of exclusive separations and thus~\cref{lem:nonempty} is actually an immediate consequence of~\cref{thm:Daniel}: if~$ N\sub S $ is a nested set which distinguishes~$ \cP $, and each element of~$ N $ distinguishes some pair of profiles in~$ \cP $, then any maximal element of~$ \vN $ is exclusive for~$ \cP $. In other words, the separations labelling the incoming edges of leaves of the tree associated with~$ N $ are exclusive. (See~\cite{TreeSets} for the precise relationship between nested sets and trees.)

However, since we are working with the more general notion of $\vS$ being $\cP$-submodular, we give an independent proof of~\cref{lem:nonempty}.

\begin{proof}[Proof of~\cref{lem:nonempty}.]
	If~$ \cP $ consists of only one orientation the assertion is trivial since $S$ is non-empty. For~$ \abs{\cP}\ge 2 $ we show the following stronger claim by induction on~$ \abs{\cP} $:
	\begin{center}\em
		If~$ \abs{\cP}\ge 2 $ there is for each~$ P\in\cP $ a separation that is exclusive but not~$ P $-exclusive for~$ \cP $.
	\end{center}
	For the base case~$ \abs{\cP}=2 $ observe that any separation distinguishing the two orientations in~$ \cP $ has two exclusive orientations, one in each element of $\cP$.
	
	Suppose now that~$ \abs{\cP}>2 $ and that the claim holds for all non-singleton proper subsets of~$ \cP $. Let~$ P\in\cP $ be the given fixed orientation and set~$ \cP'\coloneqq\cP\sm\menge{P} $. By the induction hypothesis applied to~$ \cP' $ and an arbitrary orientation in $\cP'$ there is an exclusive separation~$ \vr $ for~$ \cP' $, contained in some~$ Q\in\cP' $. Applying the induction hypothesis again to~$ \cP' $ and~$ Q $ yields another separation~$ \vs $ that is exclusive for~$ \cP' $ and lies in some~$ Q'\in\cP' $ with~$ Q\ne Q' $.
	
	If~$ \vr $ or~$ \vs $ is also exclusive for~$ \cP $ then we are done. So suppose not, that is, suppose we have~$ \vr,\vs\in P $. Then~$ r\ne s $, and hence~$ \vr $ and~$ \vs $ must be incomparable by the consistency of~$ Q $ and~$ Q' $. If~$ \vr\le\sv $ then~$ \sv $ is~$ Q $-exclusive for~$ \cP $. Moreover $\sv\le \vr$ is not possible by the consistency of $P$. Thus we may assume that~$ r $ and~$ s $ cross.
	
	By $\cP$-submodularity of $\vS$, there is a $\cP$-join or a $\cP$-meet of $\vr$ and $\sv$ in $\vS$; by symmetry we may assume that there is a $\cP$-join~$ (\vr\join\sv)\in\vS $. Since~$ \vs $ is~$ Q' $-exclusive we have~$ \sv\in Q $ and hence~$ (\vr\join\sv)\in Q $ by the fact that $\vr\join \sv$ is the $\cP$-join of $\vr$ and $\sv$. From~$ (\vr\join\sv)\ge\vr $ we infer that~$ \vr\join\sv $ is~$ Q $-exclusive for~$ \cP' $. Moreover we cannot have~$ (\vr\join\sv)\in P $: it would be inconsistent with~$ \vs\in P $ as~$ r $ and~$ s $ cross.
	
	Therefore~$ \vr\join\sv $ is exclusive but not~$ P $-exclusive for~$ \cP $.
\end{proof}

We remark that, in the case of a submodular separation system $\vS$ inside a universe of separations and a set $\cP$ of profiles, the stronger assertion used for the induction hypothesis in this proof, too, can be established immediately using~\cref{thm:Daniel}: for~$ \abs{\cP}\ge 2 $ the tree associated with the nested set~$ N\sub S $ distinguishing~$ \cP $ has at least two leaves, and hence some leaf for which the separation labelling its incoming edge does not lie in the fixed profile~$ P $.

Returning to the proof of~\cref{thm:canonical}, let us find a way to canonically pick representatives of those~$ P\in\cP $ with non-empty~$ M_P $ in such a way that these representatives are nested with each other. For the `canonically'-part of this we will make use of the fact that the sets~$ M_P $ themselves are invariants of~$ \cP $ and~$ \vS $. For the nestedness we start by showing that separations from different~$ M_P $'s cannot cross at all:

\begin{LEM}\label{lem:nestedbetween}
	For~$ P\ne P' $ all~$ \vr\in M_P $ and~$ \vs\in M_{P'} $ are pairwise nested.
\end{LEM}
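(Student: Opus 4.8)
The plan is to argue by contradiction: suppose $\vr\in M_P$ and $\vs\in M_{P'}$ cross, where $P\neq P'$. Since $\vr$ is $P$-exclusive and $\vs$ is $P'$-exclusive, and $r$ and $s$ cross (so in particular $r\neq s$), I first want to pin down which orientations of $r$ and $s$ lie in which profiles. We have $\vr\in P$ and $\vs\in P'$ by definition of $M_P$, $M_{P'}$. Consider the orientation of $s$ lying in $P$: it cannot be $\vs$, since $\vs$ is exclusive to $P'\neq P$, so $\sv\in P$. Symmetrically $\rv\in P'$. Thus $\vr,\sv\in P$ and $\rv,\vs\in P'$, and since $r$ and $s$ cross these four separations are pairwise incomparable in the relevant directions.

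Now apply $\cP$-submodularity to the crossing pair $\vr$ and $\sv$. By symmetry (interchanging the roles of $P,P'$ and of $r,s$ if necessary) we may assume there is a $\cP$-join $\vr\join\sv\in\vS$. Since $\vr\in P$ and $\sv\in P$, the $\cP$-join property gives $\vr\join\sv\in P$. As $\vr\le\vr\join\sv$ and $\vr$ is $P$-exclusive for $\cP$, the observation preceding the definition of $M_P$ shows $\vr\join\sv$ is also $P$-exclusive for $\cP$. Maximality of $\vr$ in $P$ among exclusive separations then forces $\vr\join\sv=\vr$, i.e.\ $\sv\le\vr$. But then $\rv\le\vs$, and together with $\rv,\vs\in P'$ this means $P'$ contains the comparable-in-that-direction pair $\rv\le\vs$ with $r\neq s$ — wait, that is consistent; the contradiction instead must come from the crossing hypothesis. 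Indeed $\sv\le\vr$ says $r$ and $s$ are nested, contradicting that they cross. Hence no such crossing pair exists.

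The main obstacle I expect is the bookkeeping of orientations and the symmetry reduction: $\cP$-submodularity only guarantees \emph{a} $\cP$-join \emph{or} a $\cP$-meet of the crossing pair, and one must choose the crossing pair (here $\{\vr,\sv\}$ rather than $\{\vr,\vs\}$) so that whichever of join/meet exists can be pushed back into $P$ via the $\cP$-join/$\cP$-meet property and then contradicts the maximality of $\vr$ (or, in the dual case, of $\sv$, using that $\sv$ being $P$-exclusive is equivalent to $\vs$ being $P'$-exclusive only after another orientation flip). Once the correct pair and the correct side are selected, each step is a one-line application of the definitions: the $\cP$-join/$\cP$-meet property to get membership in $P$, the ``upward-closure of exclusivity'' observation, and maximality in $M_P$. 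The conclusion $\sv\le\vr$ (or its dual) then directly witnesses nestedness of $r$ and $s$, the desired contradiction.
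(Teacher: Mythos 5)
Your proof is correct and follows essentially the same route as the paper: apply $\cP$-submodularity to the crossing pair $\vr,\sv$, push the resulting $\cP$-join (or, by the $P\leftrightarrow P'$ symmetry, the dual) into $P$ via $\sv\in P$, and use upward closure of exclusivity together with maximality of $\vr$ in $M_P$. The only cosmetic difference is that the paper notes $\vr\join\sv$ is \emph{strictly} larger than $\vr$ (strictness coming from the crossing assumption) and contradicts maximality directly, whereas you let maximality force $\vr\join\sv=\vr$ and then contradict the crossing -- the same argument read in the opposite order.
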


\begin{proof}
	Suppose some~$ \vr\in M_P $ and~$ \vs\in M_{P'} $ cross. By $\cP$-submodularity of $\vS$, there is a $\cP$-join~$ \vr\join\sv $ or a $\cP$-meet~$ \vr\meet\sv $ in~$ \vS $; by symmetry we may suppose that~$ (\vr\join\sv)\in\vS $. Then~$ P $, too, contains this separation since~$ \sv\in P $. But~$ \vr\join\sv $ is also~$ P $-exclusive and strictly larger than~$ \vr $, a contradiction.
\end{proof}

It is possible, however, that the set~$ M_P $ itself is not nested. In fact the elements of~$ M_P $ all cross each other, unless~$ \cP=\menge{P} $: any~$ \vr $ and~$ \vs $ in~$ M_P $ that are nested must point towards each other by maximality. But every other orientation in~$ \cP $ contains both~$ \rv $ and~$ \sv $ and would then be inconsistent. If we want to represent a~$ P\in\cP $ with non-empty~$ M_P $ by an element of~$ M_P $, we are therefore limited to picking at most one element of~$ M_P $. However there is no canonical way of singling out an element of~$ M_P $ to be the representative of~$ P $; we must therefore find another way of choosing an invariant~$ P $-exclusive separation, using~$ M_P $ only as a starting point.

For this we will show that each $ M_P $ has an infimum in $\vS$ and that this infimum is again~$ P $-exclusive:

\begin{LEM}\label{lem:infimum}
	Let~$ P\in\cP $ with~$ M_P\ne\emptyset $ and~$ \cP\ne\menge{P} $ be given. Then~$ M_P $ has an infimum~$ \vs_P $ in the poset~$ \vS $, and~$ \vs_P $ is~$ P $-exclusive for~$ \cP $. Moreover if some~$ t\in S $ is nested with~$ M_P $ then~$ t $ is also nested with~$ s_P $.
\end{LEM}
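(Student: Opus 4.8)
The plan is to show that any two distinct elements of $M_P$ have a $\cP$-meet in $\vS$ which is $P$-exclusive, and that this meet lies below \emph{every} element of $M_P$. It then follows that all these pairwise meets coincide in one separation $\vs_P$, which is a lower bound of $M_P$, is $P$-exclusive, and is the \emph{greatest} lower bound, since any lower bound of $M_P$ is in particular a lower bound of $\{\vr,\vs\}$ and hence $\le\vr\meet\vs=\vs_P$; the last clause of the lemma will then follow from the fish lemma. If $\abs{M_P}=1$ the separation $\vs_P$ is just the element of $M_P$ and everything is immediate, so assume $\abs{M_P}\ge 2$; then the elements of $M_P$ pairwise cross, as noted just before the lemma.

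For the first point, fix distinct $\vr,\vs\in M_P$. By $\cP$-submodularity they have a $\cP$-join or a $\cP$-meet in $\vS$. A $\cP$-join $\vr\join\vs$ would lie in $P$ (as $\vr,\vs\in P$) and satisfy $\vr\join\vs\ge\vr$, hence be $P$-exclusive, hence equal $\vr$ by maximality of $\vr$ in $M_P$, which forces $\vs\le\vr$ --- impossible since $r$ and $s$ cross. So the $\cP$-meet $\vr\meet\vs$ exists in $\vS$. It lies in $P$ (a short consistency argument rules out $(\vr\meet\vs)^\ast=\rv\join\sv\in P$), and it lies in no orientation $Q\in\cP$ other than $P$: such a $Q$ would contain $\rv$ and $\sv$ (since $\vr,\vs$ are $P$-exclusive), hence also $(\vr\meet\vs)^\ast$ by the $\cP$-meet property, which is absurd. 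Thus $\vr\meet\vs$ is $P$-exclusive.

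The heart of the proof is to show $\vr\meet\vs\le\vt$ for every $\vt\in M_P$; write $\vm\coloneqq\vr\meet\vs$. Here $\vt\le\vm$ would give $\vt\le\vr$, contradicting that $r$ and $t$ cross; $\tv\le\vm$ would give $\rv\le\vt$ with $\vr,\vt\in P$, contradicting consistency of $P$; and $\vm\le\tv$ is excluded by fixing $Q\in\cP$ with $Q\ne P$ (possible since $\cP\ne\menge{P}$): then $\rv\join\sv=\vm^\ast\in Q$ and $\tv\in Q$, while $\vm\le\tv$ yields $\vt\le\vm^\ast$ together with $t\ne m$ (the two coincidence cases $\vt=\vm$ and $\vt=\vm^\ast$ being ruled out separately), contradicting consistency of $Q$. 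There remains the possibility that $\vm$ and $\vt$ cross, and this is the step I expect to be the main obstacle. To exclude it one applies $\cP$-submodularity to $\vm$ and $\vt$: a $\cP$-join $\vm\join\vt$ is again killed by maximality of $\vt$, so a $\cP$-meet $\vm\meet\vt$ exists, is $P$-exclusive by the reasoning of the previous paragraph, and is \emph{strictly} below $\vm$ (equality would give $\vm\le\vt$). One must then turn this into a contradiction; the approach I would take is a further case analysis on the relative positions of $\vm$, $\vr\meet\vt$ and $\vs\meet\vt$ --- all $P$-exclusive by the previous paragraph --- using consistency in $P$ and in $Q$, much as in the proof of \cref{lem:nestedbetween}.

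Granting this, $\vr\meet\vs\le\vt$ for all $\vt\in M_P$; since the same holds for every pair, any two pairwise meets of elements of $M_P$ are $\le$ each other and hence equal. Let $\vs_P$ denote this common separation; as explained in the first paragraph, $\vs_P$ is then the infimum of $M_P$ in $\vS$ and is $P$-exclusive. Finally, if $t\in S$ is nested with every element of $M_P$, pick distinct $\vr,\vs\in M_P$; since $r$ and $s$ cross and $t$ is nested with both, \cref{lem:fish} shows that $t$ is nested with $\vr\meet\vs=\vs_P$, which completes the proof.
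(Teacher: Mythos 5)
Your building blocks are the same as the paper's---a $\cP$-join of two crossing $P$-exclusive separations is excluded by maximality in $M_P$, so a $\cP$-meet exists and is again $P$-exclusive, and \cref{lem:fish} yields nestedness with $t$---but the reduction your whole argument rests on, namely that the pairwise meet $\vr\meet\vs$ of two elements of $M_P$ lies below \emph{every} $\vt\in M_P$, is precisely the step you leave open, and it is not merely the hard case: nothing in the hypotheses forces it. In the case that $\vr\meet\vs$ crosses $\vt$, your own analysis only yields that a $\cP$-meet of $\vr\meet\vs$ and $\vt$ exists, is $P$-exclusive, and is strictly smaller than $\vr\meet\vs$; but that configuration is perfectly consistent---it just means that the infimum of $M_P$ lies strictly below this particular pairwise meet---so there is no contradiction to be extracted from consistency in $P$ and in some $Q\ne P$. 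What you would need is that all pairwise $\cP$-meets of elements of $M_P$ coincide and already equal $\inf M_P$, which is a stronger statement than the lemma itself and is not guaranteed by $\cP$-submodularity. As written, the central claim of your proof is therefore unproved, and the suggested case analysis on $\vr\meet\vs$, $\vr\meet\vt$, $\vs\meet\vt$ has no contradiction to find.

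The paper sidesteps this entirely: it fixes an enumeration $M_P=\menge{\vr_1,\dots,\vr_n}$ and shows by induction on $i$ that $\vs_i=\inf\{\vr_1,\dots,\vr_i\}$ exists in $\vS$, is $P$-exclusive, and is nested with $t$. In the crossing case one only ever meets the \emph{running infimum} $\vs_{i-1}$ with the next element $\vr_i$: the $\cP$-join is excluded by maximality of $\vr_i$ (your argument), the resulting $\cP$-meet is the new running infimum (an infimum of $\{\vs_{i-1},\vr_i\}$ is one of $\{\vr_1,\dots,\vr_i\}$), $P$-exclusivity transfers exactly as in your second paragraph, and \cref{lem:fish} preserves nestedness with $t$ along the induction. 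Replacing your ``pairwise meets are global lower bounds'' claim by this iterative construction repairs the proof; without it there is a genuine gap.
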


\begin{proof}
	Fix an enumeration~$ M_P=\menge{\vr_1,\dots,\vr_n} $ and some~$ t\in S $ that is nested with~$ M_P $. We will show by induction on $i$ that there is an infimum  $\vs_i\coloneqq \inf\{\vr_1,\dots,\vr_i\}$ in~$ \vS $; that this infimum is~$ P $-exclusive for~$ \cP $; and that it is nested with~$ t $. This then yields the claim for~$ i=n $.
	
	The case~$ i=1 $ is trivially true, so suppose that~$ i>1 $ and that~$ \vs_{i-1}=\inf \{\vr_1,\dots,\vr_{i-1}\} $ is already known to be the infimum of $\vr_1,\dots, \vr_{i-1}$ in~$ \vS $, that it is~$ P $-exclusive, and that it is nested with~$ t $.
	
	In the case that~$ s_{i-1}=r_i $ we have either~$\vs_{i-1}=\vr_i$ or~$ \vs_{i-1}=\rv_i $. The latter of these is impossible since~$ P $ contains both of the~$ P $-exclusive separations~$ \vs_{i-1} $ and~$ \vr_i $. The former, however, gives that~$ \vr_i $ is the infimum of~$ \vr_1,\dots,\vr_i $ and thus as claimed by~$ \vr_i\in M_P $.
	
	So suppose that~$ s_{i-1}\ne r_i $. Let us first treat the case that~$ \vr_i $ and~$ \vs_{i-1} $ are nested. Clearly the two cannot point away from each other since~$ P $ is consistent. If~$ \vr_i $ and~$ \vs_{i-1} $ are comparable then one of the two is the infimum of~$\vr_i$ and~$\vs_{i-1}$ and thus the infimum of~$\vr_1, \dots, \vr_i$ in~$\vS$. Since both~$\vs_{i-1}$ and~$\vr_i$ are~$P$-exclusive and nested with~$t$, this infimum is thus as claimed. Finally, if~$ \vr_i $ and~$ \vs_{i-1} $ point towards each other, we obtain a contradiction: for then their inverses point away from each other, making every orientation in~$ \cP $ other than~$ P $ inconsistent. Thus if~$ \vr_i $ and~$ \vs_{i-1} $ are nested the induction hypothesis holds for~$ \vs_i $.
	
	Let us now consider the case that~$ \vr_i $ and~$ \vs_{i-1} $ cross. Then there needs to be a $\cP$-join or a $\cP$-meet of $\vr_i$ and $\vs_{i-1}$. However we cannot have a $\cP$-join~$ \vr_i\join\vs_{i-1} $ in~$ \vS $ since this join would be~$ P $-exclusive and strictly larger than~$ \vr_i\in M_P $. Therefore there is a $\cP$-meet~$ (\vr\meet\vs_{i-1})\in\vS $. By consistency we have that~$ \vs_i\in P $. Every orientation in~$ \cP $ other than~$ P $ contains~$ \rv_i $ as well as~$ \sv_{i-1} $ and hence~$ \sv_{i} $ by the definition of $\cP$-meet, which shows that~$ \vs_i $ is~$ P $-exclusive. Finally, by~\cref{lem:fish},~$ \vs_i $ is also nested with~$ t $.
\end{proof}

It remains to show that after picking as a representative for each~$ P\in\cP $ with exclusive separations the infimum of~$ M_P $, the set of separations in~$ \vS $ that are nested with all these representatives is still rich enough to distinguish all orientations in~$ \cP $ for which we have not yet picked a representative.

For this let~$ \vS'\sub\vS $ be the system of all those separations that are nested with all~$ M_P $, and let~$ \cP'\sub\cP $ be the set of those orientations~$ Q $ that have empty~$ M_Q $. Our next lemma says that if we restrict ourselves to~$ \vS' $, we can still distinguish~$ \cP' $:

\begin{LEM}\label{lem:restriction}
	The separation system~$ \vS' $ is $\cP'$-submodular and distinguishes~$ \cP' $.
\end{LEM}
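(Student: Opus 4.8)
The plan is to prove the two assertions separately: the $\cP'$-submodularity is quick, and the distinguishing property is the real content.

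\emph{$\cP'$-submodularity.} Let $\vr,\vs\in\vS'$ cross. By $\cP$-submodularity of $\vS$ they have, say, a $\cP$-join $\vr\join\vs$ in $\vS$ (the $\cP$-meet case being symmetric). I would first check $\vr\join\vs\in\vS'$: any $\vt$ lying in some $M_P$ has its underlying separation $t$ nested with both $r$ and $s$ (as $\vr,\vs\in\vS'$), so by \cref{lem:fish} it is nested with $\vr\join\vs$; hence $\vr\join\vs$ is nested with all the $M_P$. Being the supremum of $\vr,\vs$ in $\vS$ it remains their supremum once the poset is cut down to $\vS'$, and since $\cP'\sub\cP$, every $Q\in\cP'$ containing both $\vr$ and $\vs$ contains $\vr\join\vs$. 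So $\vr\join\vs$ is a $\cP'$-join in $\vS'$.

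\emph{Distinguishing $\cP'$.} Fix distinct $Q,Q'\in\cP'$ and pick a $\le$-maximal separation $\vs$ with $\vs\in Q$ and $\sv\in Q'$ (such a maximal element exists by the finiteness that is standard in this setting, as already used in \cref{lem:infimum}). If $s\in S'$ we are done, so assume $s$ crosses some $\vr\in M_P$. Then $P\notin\{Q,Q'\}$ (because $M_Q=M_{Q'}=\es\neq M_P$), so $P$-exclusivity of $\vr$ gives $\rv\in Q\cap Q'$. I claim $\vs\in P$: if instead $\sv\in P$, then $\cP$-submodularity applied to the crossing pair $\vr,\sv$ would yield either a $\cP$-join $\vr\join\sv$ — which lies in $P$ and strictly above $\vr$, hence is $P$-exclusive, contradicting $\vr\in M_P$ — or a $\cP$-meet $\vr\meet\sv$, and a short consistency check then shows $\vr\meet\sv\in Q'$ and $\rv\join\vs=(\vr\meet\sv)^*\in Q$, a distinguishing separation strictly above $\vs$, contradicting maximality of $\vs$. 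Hence $\vs\in P$, and the same dichotomy applied to $\vr,\vs$ rules out the $\cP$-join (again $P$-exclusive above $\vr$) and produces a $\cP$-meet $\vr\meet\vs\in\vS$. One checks by consistency that $\vr\meet\vs\in Q$, $(\vr\meet\vs)^*\in Q'$ and $\vr\meet\vs\in P$; so $\vr\meet\vs$ still distinguishes $Q$ and $Q'$, lies below $\vr$ (hence is nested with $r$) and strictly below $\vs$, and by \cref{lem:fish} together with \cref{lem:nestedbetween} it remains nested with every $M_{P''}$-element $t$ with which $s$ was nested: if $t\notin M_P$ then $t$ is also nested with $r$ by \cref{lem:nestedbetween}, so \cref{lem:fish} applies. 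Replacing $s$ by $\vr\meet\vs$ and iterating, the separation strictly decreases at each step while still distinguishing $Q,Q'$ and losing no nestedness with the $M_{P''}$ outside the $M_P$ currently being cleared; so after finitely many steps we arrive at a distinguishing separation nested with all of $\bigcup_P M_P$, that is, at one in $S'$.

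The hard part will be this iteration. One must verify that the shifted separation again lies in the exclusive orientation of whichever $M$-element one shifts by next — so that the relevant $\cP$-join is again excluded by maximality of the $M$-elements — and that its orientation of the next $r'$ again lies in both $Q$ and $Q'$; and that clearing one $M_P$ to completion does not re-introduce a crossing with some other $M_{P''}$, so that finitely many rounds suffice. These are all consistency computations of the kind already appearing in \cref{lem:infimum}, but the bookkeeping of which orientation of each corner separation sits in which of $Q$, $Q'$ and $P$ has to be carried out carefully.
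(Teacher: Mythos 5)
Your first part ($\cP'$-submodularity) is fine and is essentially the paper's argument. The gap is in the second part, and it is exactly the point you yourself flag as ``the hard part'': your termination argument needs that, at every step of the iteration, the current distinguishing separation (oriented into $Q$) lies in the exclusive orientation $P''$ of the next set $M_{P''}$ you shift along, so that the $\cP$-join is excluded by maximality of the elements of $M_{P''}$ and you can take the $\cP$-meet, which is strictly smaller. This is not established, and it need not hold: consistency of $P''$ does not determine which orientation of the current separation lies in $P''$ (e.g.\ after the first shift the current separation satisfies $\vr\meet\vs\le\vr$ with $\rv\in P''$, which is compatible with either orientation lying in $P''$). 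If the $Q'$-orientation is the one in $P''$, the dichotomy forces you onto the other corner, whose $Q$-orientation is $\rv'\join\vs$, i.e.\ \emph{strictly larger}; your measure of progress (strict decrease) then fails, and the maximality of the initial $\vs$, which rescued this situation in the very first step, is no longer available once $\vs$ has been replaced. So as written the iteration has no termination argument. (It can in fact be repaired -- one can shift with whichever orientation of the current separation lies in $P''$, observe that within one $M_{P''}$ the direction of the shifts is consistent, and use the number of sets $M_{P''}$ already cleared as the potential -- but that argument is not in your proposal and needs the same careful bookkeeping you defer.)

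For comparison, the paper avoids any iteration across different $M_P$'s by a double extremal choice: first pick $s$ distinguishing $Q,Q'$ and nested with as many sets $M_{P'}$ as possible; then, for one offending $P$, pick among the separations that distinguish $Q,Q'$ and are nested with every $M_{P'}$ with which $s$ is nested a \emph{minimal} one $\vs'$ subject to $\vs'\in P$. Insisting on the orientation in $P$ from the outset is precisely what removes your orientation problem: for any crossing $\vr\in M_P$ the $\cP$-join is excluded (it would be a $P$-exclusive separation above $\vr$), the $\cP$-meet $\vr\meet\vs'$ again lies in $P$ by consistency, still distinguishes $Q,Q'$, is still nested with the relevant $M_{P'}$ by \cref{lem:fish} and \cref{lem:nestedbetween}, and is strictly smaller -- contradicting minimality, so $\vs'$ is nested with all of $M_P$ and the choice of $s$ is contradicted. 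You should either adopt this extremal-choice argument or supply the missing invariant and termination measure for your iteration.
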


\begin{proof}
	The fact that~$ \vS' $ is $\cP'$-submodular is a direct consequence of~\cref{lem:fish}: it implies that for~$ \vr $ and~$ \vs $ in~$ \vS $ any~$ \cP $-meet or~$ \cP $-join of them in~$ \vS $ is contained in~$ \vS' $. Since~$ \cP'\sub\cP $ this~$ \cP $-join or~$ \cP $-meet is in fact a~$ \cP' $-join or~$ \cP' $-meet of~$ \vr $ and~$ \vs $ in~$ \vS' $.
	
	To see that $\vS'$ distinguishes $\cP'$, let~$ Q $ and~$ Q' $ be distinct orientations in~$ \cP' $; we shall show that some~$ \vs'\in\vS' $ distinguishes them. For this choose a separation~$ s\in S $ which distinguishes~$ Q $ and~$ Q' $ and which is nested with~$ M_P $ for as many~$ P\in\cP $ as possible. If~$ s $ is nested with all~$ M_P $ we are done; otherwise there is some~$ P\in\cP $ for which~$ s $ crosses some separation in~$ M_P $. 
	
	So suppose that there is a~$ P\in\cP $ for which~$ s $ is not nested with~$ M_P $. Among all~$ \vs'\in\vS $ which distinguish~$ Q $ and~$ Q' $ and which are nested with each~$ M_{P'} $ with which~$ s $ is nested, pick a minimal~$ \vs' $ with~$ \vs'\in P $. We claim that this~$ \vs' $ is nested with~$ M_P $, contradicting the choice of~$ s $.
	
	To see this, suppose that~$ \vs' $ crosses some~$ \vr\in M_P $. Then there cannot exist a~$\cP$-join of~$\vr$ and~$\vs'$ in~$\vS$ since this join would be a strictly larger~$ P $-exclusive separation than~$ \vr $. Hence there is a~$\cP$-meet~$\vr\meet\vs'$ of~$\vr$ and~$\vs'$ in~$\vS $. By~$ P\notin\menge{Q,Q'} $ we have that both~$ Q $ and~$ Q' $ contain~$ \rv $, and hence this separation $\vr\meet \vs'$ distinguishes~$ Q $ and~$ Q' $ as well: one of the two orientations contains $\vs'$ and thus also $\vr\meet \vs'$ by consistency. The other contains both~$\sv'$ and~$\rv$ and thus also~$(\rv\join \sv')=(\vr\meet \vs')^\ast$ by the fact that~$\vr\meet \vs'$ is the~$\cP$-meet of~$\vr$ and~$\vs'$. 
	However, by~\cref{lem:fish} and~\cref{lem:nestedbetween}, this~$ \vr\meet\vs' $ would be nested with each~$ M_{P'} $ with which~$ s $ was nested, while being strictly smaller than~$ \vs' $, a contradiction.
\end{proof}

If~$ M_P $ is non-empty let us write~$ \vs_P $ for its infimum in~$ \vS $ as in~\cref{lem:infimum}. We are now ready to prove~\cref{thm:canonical} by induction.

\begin{proof}[Proof of~\cref{thm:canonical}.]
	We proceed by induction on~$ \abs{\cP} $. If~$ \abs{\cP}\le 1 $ there is nothing to show, so suppose that~$ \abs{\cP}>1 $ and that the assertion holds for all proper subsets of~$ \cP $. 
	
	Recall that~$ \vS'\sub\vS $ consists of all separations in~$ \vS $ that are nested with all sets~$ M_Q $ and that~$ \cP'\sub\cP $ is the set of all~$ Q\in\cP $ with empty~$ M_Q $. Clearly both~$ \vS' $ and~$ \cP' $ are invariants of~$ \vS $ and~$ \cP $ since the sets~$ M_Q $ themselves are invariants. For each non-empty~$ M_Q $ let~$ \vs_Q $ be its infimum in~$ \vS $ as described in~\cref{lem:infimum}. Then
	\[ N_1\coloneqq\menge{s_Q\mid Q\in\cP\sm\cP'} \]
	is clearly a canonical set. From~\cref{lem:infimum} we further know that~$ N_1 $ distinguishes all orientations in~$ \cP\sm\cP' $ from each other and from each orientation in~$ \cP' $.
	
	By~\cref{lem:nestedbetween} every element of~$ M_P $ is nested with every element of~$ M_{P'} $ for all~$ P\ne P' $. Applying the `moreover'-part of~\cref{lem:infimum} twice thus implies that~$ s_P $ is nested with every element of~$ M_{P'} $ and subsequently with~$ s_{P'} $. Therefore~$ N_1 $ is a nested set. Likewise every separation in~$ \vS' $ is nested with~$ N_1 $.
	
	Let us apply the induction hypothesis to~$ \cP' $ in~$ \vS' $, as made possible by~\cref{lem:nonempty,lem:restriction}, yielding a canonical nested set~$ N_2\sub S' $ which distinguishes~$ \cP' $. Since~$ \vS' $ and~$ \cP' $ themselves are invariants of~$ \vS $ and~$ \cP $ we have that the union~$ N_1\cup N_2 $ is the desired canonical nested set.
\end{proof}

\begin{proof}[Proof of \cref{thm:canonical_old}]
 By \cref{lem:Psub_strucsub}, given a structurally submodular separation system $\vS$ and a set $\cP$ of profiles of $S$, we know that $\vS$ is $\cP$-submodular. Thus \cref{thm:canonical_old} follows from \cref{thm:canonical}.
\end{proof}

\section*{Acknowledgement}
We would like to thank one of the reviewers for suggesting the concept of~$\cP$-submodularity, which led to \cref{thm:canonical}.

\bibliography{collective}

\vspace{0.5cm}
\noindent
\begin{minipage}{\linewidth}
 \raggedright\small
   \textbf{Christian Elbracht},
   \texttt{christian.elbracht@uni-hamburg.de}

   \textbf{Jakob Kneip},
   \texttt{jakob.kneip@uni-hamburg.de}

   Universit\"at Hamburg,
   Bundesstra\ss{}e 55,
   20146 Hamburg, Germany
\end{minipage}
\end{document}